\title{A supercharacter approach to Heilbronn sums}
\author{Stephan Ramon Garcia}
	\address{Department of Mathematics, Pomona College, 610 N. College Ave., Claremont, CA 91711} 
	\email{Stephan.Garcia@pomona.edu}
	\urladdr{\url{http://pages.pomona.edu/~sg064747}}
\author{Bob Lutz}
    \address{Department of Mathematics, University of Michigan,
2074 East Hall,
530 Church Street,
Ann Arbor, MI  48109-1043}
    \email{boblutz@umich.edu}
\thanks{Partially supported by NSF Grant DMS-1265973.}
\begin{document}
\maketitle

\begin{abstract}
Various algebraic properties of Heilbronn's exponential sum can be deduced 
through the use of supercharacter theory, a novel extension of classical
character theory due to Diaconis-Isaacs and Andr\'e.  This perspective yields
a variety of formulas and provides a method for computing the number of
solutions to Fermat-type congruences.
\end{abstract}

\section{Introduction}
	The theory of \emph{supercharacters}, of which classical character theory is a special case, was introduced 
	by P.~Diaconis and I.M.~Isaacs in 2008 \cite{Diaconis}, generalizing the \emph{basic characters} studied by
	C.~Andr\'e \cite{An95, An01, An02}.  
	The original aim of supercharacter theory was to provide new tools for studying groups,
	such as the unipotent matrix groups $U_n(q)$, that had proven intractable from the perspective of classical
	character theory.  However, recent work indicates that supercharacters on abelian groups
	are intimately tied to various exponential sums arising in the theory of numbers \cite{RSS, SESUP, GNGP, GNSG, gausscyclotomy, Kloosterman}.
	Our aim here is to explore another such connection, demonstrating that many standard properties of Heilbronn's exponential sum 
	can be systematically deduced through supercharacter theory.

	We adopt the standard notation $e(x) = \exp(2 \pi i x)$, so that the function $e(x)$ is periodic with period $1$.
	The letter $p$ will always denote an odd prime number and $g$ a primitive root modulo $p^2$.
	In this note, we show that \emph{Heilbronn sums}\footnote{The notation is not completely standardized.  For instance,
		Heath-Brown denotes the sum running from $1$ to $p$ by $S(a,p)$ in \cite{HB2}, whereas Heath-Brown and Konyagin use
		$H_p(a)$ to denote the sum running from $1$ to $p$ in \cite{HBK}.  We adopt here the notation used in Kowalski's lecture notes
		on exponential sums \cite{Kowalski}, which invites less confusion with Kloosterman or Sal\'ie sums and is also more suitable from
		the viewpoint of supercharacter theory.}
	\begin{equation}\label{eq-Heilbronn}
		H_p(a) = \sum_{\ell=1}^{p-1} e\left( \frac{a\ell^p}{p^2} \right)
	\end{equation}	
	arise as the values of 
	\emph{supercharacters} on $\Z/p^2\Z$ induced by the action of a certain subgroup of the unit group
	$(\Z/p^2\Z)^{\times}$.  This observation, coupled with the general techniques from \cite{RSS, SESUP}, permit us
	to derive a variety of identities involving Heilbronn sums.  The novelty of our approach lies in the use of
	supercharacter theory, which reduces many computations to matrix arithmetic.

	A brief review of basic facts about supercharacters on abelian groups is undertaken in Section \ref{SectionSCT},
	after which we construct the relevant supercharacter theory for Heilbronn sums in Section \ref{SectionSCH}.
	An exact formula
	involving Heilbronn sums for computing the number of solutions to 
	Fermat-type congruences $ax^p + by^p \equiv cz^p \pmod{p^2}$ is given 
	in Section \ref{SectionFLT}.
	We conclude in Section \ref{SectionHB} with an exact formula for quartic sums
	involving Heilbronn sums.
\section{Supercharacters on abelian groups}\label{SectionSCT}	

	Before proceeding, we recall a few basic facts about supercharacters on 
	abelian groups.  Since complete details can be found in \cite{RSS, SESUP}, we
	content ourselves with a quick overview of the relevant facts required in our particular case.

	Let $A$ be a subgroup of $GL_d( \Z/n\Z)$ that is closed under the transpose operation and
	let $X_1,X_2,\ldots,X_N$ denote the orbits in $G=(\Z/n\Z)^d$ under the action of $A$.  The functions
	\begin{equation}\label{eq-Supercharacter}
		\sigma_i(\vec{y}) = \sum_{\vec{x} \in X_i} e \left( \frac{ \vec{x} \cdot \vec{y} } {n} \right),
	\end{equation}
	where $\vec{x}\cdot \vec{y}$ denotes the formal dot product of two elements of $(\Z/n\Z)^d$,
	are called \emph{supercharacters} on $(\Z/n\Z)^d$ and the sets $X_i$
	are referred to as \emph{superclasses}.  It turns out that supercharacters are constant
	on superclasses, and hence we may employ the notation $\sigma_i(X_j)$ without
	confusion.  The $N \times N$ matrix	
	\begin{equation}\label{eq-USCT}
		U = \frac{1}{\sqrt{n^d}} \left[  \frac{   \sigma_i(X_j) \sqrt{  |X_j| }}{ \sqrt{|X_i|}} \right]_{i,j=1}^N
	\end{equation}
	is symmetric (i.e., $U = U^T$) and unitary.  In fact, the matrix $U$ encodes an analogue of discrete Fourier transform (DFT)
	on the space of all \emph{superclass functions} (i.e., functions $f:(\Z/n\Z)^d\to\C$ that are constant on each superclass)
	and satisfies many of the standard properties of the DFT \cite{SESUP}.
More general supercharacter theories on certain abelian groups are studied in
	\cite{Hendrickson, HendricksonNew, HendricksonThesis}.		
	
	It turns out that a variety of exponential sums that are relevant to the theory of numbers can be realized
	as supercharacters on abelian groups in the manner described above.  This approach was
	first undertaken to study Ramanujan sums \cite{RSS} and, a short while later, Gaussian periods
	\cite{GNGP,gausscyclotomy}.  The general theory is developed in \cite{SESUP}, where a number of such examples
	(see Table \ref{TableNT}) are discussed.  A novel and visually compelling class of exponential sums is considered from 
	the supercharacter perspective in \cite{GNSG}.
	
		\begin{table}
		\begin{equation*}\footnotesize
		\begin{array}{|c|c|c|c|}
			\hline
			\text{Name} & \text{Expression} & G & A \\
			\hline\hline
			\text{Gauss} & 
			\eta_j = \displaystyle \sum_{\ell=0}^{d-1} e\left( \frac{g^{k\ell+j}}{p}\right)
			& \Z/p\Z & \text{nonzero $k$th powers mod $p$} \\[20pt]
			\text{Ramanujan} & c_n(x)=\displaystyle \sum_{ \substack{ j = 1 \\ (j,n) = 1} }^n \!\!\!\! 
			e\left(\frac{ jx}{n} \right) & \Z/n\Z & (\Z/n\Z)^{\times} \\[20pt]
			\text{Kloosterman} & K_p(a,b)=\displaystyle \sum_{ \ell = 0  }^{p-1} e\left( \frac{a\ell + b \overline{\ell} }{p}\right)
			 & (\Z/p\Z)^2 & \left\{ \minimatrix{u}{0}{0}{u^{-1}}  : u \in (\Z/p\Z)^{\times} \right\} \\[20pt]
			\text{Heilbronn} & \displaystyle H_p(a)=\sum_{\ell=0}^{p-1} e\left(\frac{a \ell^p}{p^2} \right) & \Z/p^2\Z & 
			\footnotesize\text{nonzero $p$th powers mod $p^2$} \\[20pt]
			\hline
		\end{array}
		\end{equation*}	
		\caption{\footnotesize Gaussian periods, Ramanujan sums, Kloosterman sums, and Heilbronn sums 
		appear as supercharacters arising from the action of a group $A$ of automorphisms on an
		abelian group $G$.  Here $p$ denotes an odd prime number and $g$ a primitive root modulo $p$.}
		\label{TableNT}
	\end{table}		
	
	The main result we require is the following, which identifies the set of all matrices that are diagonalized
	by the unitary matrix \eqref{eq-USCT} as the span of a certain family of matrices containing combinatorial information about
	the superclasses.  A complete proof and further details can be found in \cite{SESUP} (see also \cite{RSS}).

	\begin{Lemma}\label{LemmaSESUP}
		Let $A = A^T$ be a subgroup of $GL_d(\Z/n\Z)$, let $\X = \{X_1,X_2,\ldots,X_N\}$
		denote the set of superclasses induced by the action of $A$ on $(\Z/n\Z)^d$, and let $\sigma_1,\sigma_2,\ldots,\sigma_N$
		denote the corresponding supercharacters.
		For each fixed $z$ in $X_k$, let $c_{i,j,k}$ denote the number of solutions $(x_i,y_j) \in X_i \times X_j$ to the equation $x+y = z$.
		\begin{enumerate}\addtolength{\itemsep}{0.5\baselineskip}
			\item $c_{i,j,k}$ is independent of the representative $z$ in $X_k$ which is chosen,
			\item The identity
				\begin{equation}\label{eq-sijk}
					\sigma_i(X_{\ell}) \sigma_j(X_{\ell}) = \sum_{k=1}^N c_{i,j,k} \sigma_k(X_{\ell})
				\end{equation}			
				holds for $1\leq i,j,k,\ell \leq N$.
			\item The matrices $T_1,T_2,\ldots,T_N$, whose entries are given by
				\begin{equation}\label{eq-Tijk}
					[T_i]_{j,k} = \frac{ c_{i,j,k} \sqrt{ |X_k| } }{ \sqrt{ |X_j|} },
				\end{equation}
				each satisfy 
				\begin{equation}\label{eq-TUUD}
					T_i U = U D_i,
				\end{equation}
				where 
				\begin{equation}\label{eq-DM}
					D_i = \operatorname{diag}\big(\sigma_i(X_1), \sigma_i(X_2),\ldots, \sigma_i(X_N) \big).
				\end{equation}
				In particular, the $T_i$ are simultaneously unitarily diagonalizable.
				
			\item Each $T_i$ is a normal matrix (i.e., $T_i^*T_i = T_i T_i^*$) and
				the set $\{T_1,T_2,\ldots,T_N\}$ forms a basis for the algebra of all $N \times N$ matrices 
				$T$ such that $U^*TU$ is diagonal.
		\end{enumerate}
  	\end{Lemma}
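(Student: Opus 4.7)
The plan is to prove the four assertions in order, since each builds on its predecessors, and the only genuinely structural input is the linearity of the action of $A$ on $(\Z/n\Z)^d$.

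For part (1), I would fix $z,z'\in X_k$ and choose $a\in A$ with $az=z'$. Since $A$ acts by linear automorphisms preserving each orbit, the map $(x,y)\mapsto(ax,ay)$ is a bijection of $X_i\times X_j$ with itself, and it carries solutions of $x+y=z$ bijectively onto solutions of $x+y=z'$. Hence $c_{i,j,k}$ depends only on $k$. For part (2), the computation is to evaluate $\sigma_i(y)\sigma_j(y)$ for a representative $y\in X_\ell$ by expanding the product of the defining exponential sums, grouping the $(x_i,x_j)\in X_i\times X_j$ according to the orbit $X_k$ of $x_i+x_j$, and invoking part (1) to extract $c_{i,j,k}$ as a constant. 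What is left is exactly $\sum_k c_{i,j,k}\sigma_k(X_\ell)$.

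Part (3) is then a direct matrix computation. I would expand both $(T_iU)_{j,\ell}$ and $(UD_i)_{j,\ell}$ from the definitions; the factors $\sqrt{|X_k|}$ in the entries of $T_i$ and $U$ cancel pleasantly, leaving
\[
(T_iU)_{j,\ell}=\frac{\sqrt{|X_\ell|}}{\sqrt{n^d|X_j|}}\sum_{k=1}^N c_{i,j,k}\sigma_k(X_\ell),
\]
and the identity from part (2) converts the sum into $\sigma_i(X_\ell)\sigma_j(X_\ell)$, matching $(UD_i)_{j,\ell}$. The only bookkeeping issue to watch is the placement of the $|X_j|,|X_k|,|X_\ell|$ factors in the definitions of $U$ and $T_i$; once these line up the identity $T_iU=UD_i$ is immediate.

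For part (4), normality of $T_i$ follows at once from $T_i=UD_iU^*$ (using that $U$ is unitary and $D_i$ diagonal). To get the basis statement, I would use the fact that the space of matrices diagonalized by $U$, namely $\{UDU^*:D\text{ diagonal}\}$, is an $N$-dimensional commutative algebra, so it suffices to verify that $T_1,\ldots,T_N$ are linearly independent. Via $T_i=UD_iU^*$, linear independence of $T_i$ is equivalent to linear independence of $D_i$, which is equivalent to linear independence of the rows $(\sigma_i(X_1),\ldots,\sigma_i(X_N))$ of the supercharacter table. The latter holds because, after the diagonal rescaling in \eqref{eq-USCT}, these rows become the rows of the unitary (hence invertible) matrix $U$. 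I expect the only subtle step to be this final linear-independence argument, which hinges on correctly transferring unitarity of $U$ to invertibility of the raw supercharacter table; everything else is direct verification.
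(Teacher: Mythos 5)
The paper does not actually prove Lemma \ref{LemmaSESUP}; it defers entirely to the reference \cite{SESUP} (see also \cite{RSS}). Your argument is correct and is essentially the standard one: part (1) via the equivariance of $(x,y)\mapsto(ax,ay)$ under the linear action, part (2) by expanding the product of exponential sums and grouping $x+x'$ by orbit, part (3) by direct entrywise computation feeding in \eqref{eq-sijk}, and part (4) from $T_i=UD_iU^*$ together with the observation that $U$ differs from the supercharacter table $[\sigma_i(X_j)]$ only by invertible diagonal scalings, so the rows $(\sigma_i(X_1),\ldots,\sigma_i(X_N))$, hence the $D_i$, hence the $T_i$, are linearly independent in the $N$-dimensional algebra $\{UDU^*:D\ \text{diagonal}\}$. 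The only ingredients you take as given, namely that supercharacters are constant on superclasses and that $U$ is symmetric and unitary, are exactly the facts the paper states before the lemma, so nothing is missing.
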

\section{A supercharacter theory for Heilbronn sums}\label{SectionSCH}

	We are now in a position to represent Heilbronn sums as the values of certain supercharacters on $(\Z/n\Z)^d$, where $d=1$ and $n=p^2$ for an odd prime $p$.
	Following the general outline described in Section \ref{SectionSCT}, we first require
	a group of automorphisms $A$ to act upon $G = \Z/p^2\Z$.  To this end, we need the following
	lemma.
	
	\begin{Lemma}\label{LemmaSquare}
		If $p$ is an odd prime, then for all integers $x$ and $y$ we have $x^p \equiv y^p \pmod{p^2}$ if and only if $x \equiv y \pmod{p}$.
	\end{Lemma}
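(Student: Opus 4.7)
My plan is to prove the two implications separately, since both are short and rely only on classical tools.

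For the forward direction, I would reduce modulo $p$: if $x^p \equiv y^p \pmod{p^2}$, then certainly $x^p \equiv y^p \pmod{p}$, and Fermat's little theorem gives $x^p \equiv x$ and $y^p \equiv y \pmod{p}$, whence $x \equiv y \pmod{p}$.

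For the reverse direction, the natural approach is the binomial theorem. Writing $y = x + kp$ for some integer $k$, I would expand
\begin{equation*}
y^p = (x+kp)^p = x^p + \binom{p}{1} x^{p-1}(kp) + \sum_{j=2}^{p} \binom{p}{j} x^{p-j} (kp)^j.
\end{equation*}
The second term equals $kp^2 x^{p-1}$, which is divisible by $p^2$. Each term in the remaining sum is divisible by $(kp)^j$ with $j\geq 2$, hence by $p^2$. Thus $y^p \equiv x^p \pmod{p^2}$, as required. (Here the hypothesis that $p$ is odd is not strictly needed for the divisibility count, but it is consistent with the standing convention of the paper.)

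The main obstacle is essentially nonexistent; the only tiny bookkeeping point is verifying that the $j=1$ term gains an extra factor of $p$ from the binomial coefficient $\binom{p}{1}=p$, which is what makes the overall congruence modulo $p^2$ (rather than merely $p$) go through. Since both directions are short, I would present them in a single short paragraph in the body of the paper.
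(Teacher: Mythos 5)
Your proof is correct: Fermat's little theorem handles the forward implication and the binomial expansion of $(x+kp)^p$ handles the converse, with the $j=1$ term picking up the extra factor of $p$ from $\binom{p}{1}=p$ exactly as you note. The paper omits the proof of this lemma entirely, and your argument is the standard one the authors evidently had in mind, so there is nothing to compare beyond saying it fills the gap completely.
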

	
	\begin{proof}
		If $x \equiv y \pmod{p}$, then $x = y+rp$ for some integer $r$. Therefore
		\begin{equation*}
			x^p = (y+rp)^p = \sum_{k=0}^p \binom{p}{k} y^{p-k} (rp)^k \equiv y^p \pmod{p^2},
		\end{equation*}
		since $p|\binom{p}{k}$ for $k=1,2,\ldots,p-1$. 
		
		Suppose now that $x^p \equiv y^p \pmod{p^2}$. Let
		$g$ be a primitive root modulo $p^2$, and write $x \equiv g^j$ and $y \equiv g^k\pmod{p^2}$. We see that
		$g^{jp} \equiv g^{kp} \pmod{p^2}$, whence $g^{(j-k)p} \equiv 1 \pmod{p^2}$. It follows that $(j-k)p$ is a multiple of $\phi(p^2) = p(p-1)$,
		so $(p-1)|(j-k)$.  Writing
		$j = k + m(p-1)$, we see that
		\begin{equation*}
			x \equiv g^j \equiv g^{k+m(p-1)} \equiv g^k (g^{p-1})^m \equiv g^k \equiv y, \pmod{p}
		\end{equation*}
		by Fermat's little theorem.
	\end{proof}
	
	It follows that
	\begin{equation}\label{eq-A}
		A = \{ 1^p , 2^p,\ldots, (p-1)^p\}
	\end{equation}
	is a subgroup of $(\Z/p^2\Z)^{\times}$ of order $p-1$.  
	Letting $A$ act upon $\Z/p^2\Z$ by multiplication, we obtain the orbits
	\begin{align*}
		X_1 &= gA,\\
		X_2 &= g^2A,\\
		&\vdots\\
		X_{p-1} &= g^{p-1}A,\\
		X_p &= A,\\
		X_{p+1} &= \{p,2p,\ldots,(p-1)p\},\\
		X_{p+2} &= \{0\},
	\end{align*}
	where $g$ denotes a primitive root modulo $p^2$ that will remain fixed throughout this paper.  
	We have adopted this somewhat unusual labeling scheme in order to simplify the structure of certain matrices and
	streamline a number of formulas which appear later.	
	For $1 \leq i,j \leq p$, we find that
	\begin{equation*}
		\sigma_i(X_j) = \sum_{\ell=1}^{p-1} e\left( \frac{g^{j} (g^{i} \ell^p)}{p^2}\right)
		= \sum_{\ell=1}^{p-1} e\left( \frac{g^{i+j}  \ell^p}{p^2}\right)
		= H_p(g^{i+j}).
	\end{equation*}
	Additionally, since $A$ is closed under negation and $e(-z)=\overline{e(z)}$ for all $z\in \C$, all Heilbronn sums are real.
	We pause to make the following observation.
	
	\begin{Lemma}\label{LemmaDOUP}
		The value of $H_p(g^k)$ depends only upon $k \pmod{p}$.
	\end{Lemma}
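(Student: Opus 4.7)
The plan is to deduce the lemma directly from the supercharacter framework just constructed. The key observation is that the definition of $H_p$ in \eqref{eq-Heilbronn} can be rewritten as $H_p(a) = \sum_{x \in A} e(ax/p^2)$, using that $A = \{\ell^p : 1 \le \ell \le p-1\}$; equivalently, $H_p(a) = \sigma_p(a)$ in our labeling of the orbits. Since supercharacters are constant on superclasses, it will suffice to show that the coset $g^k A$ depends only on $k \pmod p$.

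To this end I would identify the internal structure of $A$. Since $g$ is a primitive root modulo $p^2$, it has order $p(p-1)$, so $g^p$ has order $p-1$. Because $g^p$ already belongs to $A$ and $|A|=p-1$, we conclude $A = \langle g^p \rangle$. The quotient group $(\Z/p^2\Z)^{\times}/A$ is therefore cyclic of order $p$, generated by the image of $g$, and $g^k A = g^j A$ if and only if $k \equiv j \pmod p$. Combining these observations yields the lemma: writing $k = j + mp$ with $0 \le j < p$, the element $g^k$ lies in the orbit $X_j$ (with the convention that $X_0 = X_p = A$), and hence $H_p(g^k) = \sigma_p(X_j)$, which clearly depends only on $j = k \bmod p$.

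I do not expect a substantive obstacle, only the mild bookkeeping hazard of matching up the cyclic index $k$ with the labeling $X_1,\ldots,X_{p-1},X_p$ of the orbits introduced in the preceding paragraph. If one prefers to bypass the supercharacter language altogether, an alternative is to verify coset-invariance of $H_p$ by hand: for $u = m^p \in A$, the substitution $\ell \mapsto m\ell$ in \eqref{eq-Heilbronn} gives $H_p(ua) = \sum_{\ell=1}^{p-1} e\bigl(a(m\ell)^p/p^2\bigr)$, and Lemma \ref{LemmaSquare} guarantees that $(m\ell)^p$ runs over the same multiset of residues mod $p^2$ as $\ell^p$, so $H_p(ua) = H_p(a)$. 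Taking $u = g^p$ then gives the lemma directly.
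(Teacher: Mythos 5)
Your proof is correct; the paper omits any proof of this lemma (presenting it as a ``simple observation''), and your reasoning supplies exactly the intended justification: $A=\langle g^p\rangle$ is the full orbit/superclass structure, so $g^kA$ depends only on $k\pmod p$ and $H_p=\sigma_p$ is constant on these cosets. Your second, substitution-based argument via Lemma \ref{LemmaSquare} is the cleanest self-contained version and would serve well as the written proof.
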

	
	\begin{proof}
		Since $(g,p)=1$, the map $\ell\mapsto g^j\ell$ is a permutation of $\Z/p\Z$
		for each $j$.  In light of Lemma \ref{LemmaSquare} we conclude that
		\begin{equation*}
			H_p(g^{k+jp})
			=\sum_{\ell=1}^{p-1} e\left(\frac{g^{k+jp}\ell^{p}}{p^2}\right) 
			= \sum_{\ell=1}^{p-1} e\left(\frac{g^k(g^j\ell)^{p}}{p^2}\right)
			=\sum_{r=1}^{p-1} e\left(\frac{g^kr^{p}}{p^2}\right)=H_p(g^k),
		\end{equation*}
		as desired.
	\end{proof}

	Upon performing some additional elementary computations to evaluate the remaining values of $\sigma_i(X_j)$,
	we obtain the \emph{supercharacter table} corresponding to the supercharacter theory on $\Z/p^2\Z$
	arising from the action of $A$ (see Table \ref{TableSCT}).  
	\begin{table}\small
		\begin{equation*}
			\begin{array}{|c || cc cc|cc |}
				\hline & X_1 & X_2 & \cdots & X_p & X_{p+1} & X_{p+2} \\ 
				|X_i| & p-1 & p-1 & \cdots & p-1 & p-1 & 1 \\
				\hline\hline
				\sigma_{1} & H_p(g^2) & H_p(g^3) & \cdots & H_p(g) & -1 & p-1\\
				\sigma_{2} & H_p(g^3) & H_p(g^4) & \cdots & H_p(g^2) & -1 & p-1\\
				\vdots & \vdots & \vdots & \iddots & \vdots & -1 & p-1\\
				\sigma_{p} & H_p(g) & H_p(g^2) & \cdots & H_p(1) & -1 & p-1\\ \hline
				\sigma_{p+1} & -1 & -1 & \cdots & -1 & p-1 & p-1\\
				\sigma_{p+2} & 1 & 1 & \cdots & 1 & 1  & 1\\ \hline
			\end{array}
		\end{equation*}
		
		\caption{\footnotesize The supercharacter table corresponding to the supercharacter theory on $\Z/p^2\Z$
		arising from the action of the subgroup $A=\{ 1^p , 2^p,\ldots, (p-1)^p\}$.}
		\label{TableSCT}
	\end{table}	
	Also of relevance is the unitary matrix $U$ defined by \eqref{eq-USCT}, which is given by
	\begin{equation}\label{eq-U}
		U=\small\frac{1}{p}\left[
		\begin{array}{ccccc|cc}
			 H_p(g^2)& H_p(g^3) & H_p(g^4) & \cdots  & H_p(g)  & -1 & \sqrt{p-1}\\
			 H_p(g^3)& H_p(g^4) & H_p(g^5) & \cdots & H_p(g^2)  & -1 & \sqrt{p-1}\\
			 H_p(g^4)& H_p(g^5) & H_p(g^6) & \cdots & H_p(g^3)  & -1 & \sqrt{p-1}\\
			 \vdots & \vdots & \vdots  & \iddots & \vdots & \vdots &\vdots \\
			 H_p(g) & H_p(g^2) & H_p(g^3)  &\cdots & H_p(1)& -1 & \sqrt{p-1}\\\hline
			-1 & -1 & -1 & \cdots & -1 & p-1 &\sqrt{p-1} \\
			\sqrt{p-1} & \sqrt{p-1} & \sqrt{p-1} & \cdots & \sqrt{p-1} & \sqrt{p-1}& 1\\
		\end{array}
		\right].
	\end{equation}

	We obtain the following identities from the fact that \eqref{eq-U} is unitary:
	\begin{align}
		\sum_{\ell=1}^p H_p(g^\ell) &= 0, \label{eq-Sum}\\
		\sum_{\ell=1}^p H_p^2(g^\ell) &= p(p-1), \label{eq-Norm}  \\
		\sum_{\ell=1}^p H_p(g^\ell)H_p(g^{i+\ell}) &= -p. \label{eq-DotProduct}  \
	\end{align}
	Identity \eqref{eq-Sum} is obtained by taking the inner product of the first column of $U$ with the $(p+1)$st.
	Identity \eqref{eq-Norm} is obtained by noting that the first column of $U$ has unit norm. Identity \eqref{eq-DotProduct} is obtained by taking the inner product of any two columns of $U$ among the first $p$ columns.
	Squaring \eqref{eq-Sum}, expanding, and using \eqref{eq-Norm} provides us with
	\begin{equation}\label{eq-snt}
		\sum_{1\leq r<s\leq p} H_p(g^r) H_p(g^s) = -\frac{p(p-1)}{2}.
	\end{equation}

	In light of the fact that $U$ is a real symmetric unitary matrix, we see that $U^2= I$
	whence the only possible eigenvalues of $U$ are $\pm 1$.  In fact, we can say much more.

	\begin{Proposition}
		The matrix $U$ has eigenvalues $1$ and $-1$ with multiplicities $(p+3)/2$ and $(p+1)/2$, respectively.
		In particular,
		\begin{equation*}
			\det U = 
			\begin{cases}
				-1 & \text{if $p \equiv 1 \pmod{4}$}, \\
				1 & \text{if $p \equiv 3 \pmod{4}$}.
			\end{cases}
		\end{equation*}
	\end{Proposition}

	\begin{proof}
		Since the only possible eigenvalues are $\pm 1$, it suffices to show that $\tr U = 1$.
		In light of Lemma \ref{LemmaDOUP} and the fact that $p$ is odd, it follows that
		\begin{equation*}
			\tr U 
			= \frac{1}{p} \Big( \sum_{\ell=0}^{p-1} H_p(g^{2\ell}) + (p-1)+1\Big) 
			= 1+ \frac{1}{p} \sum_{\ell=0}^{p-1} H_p(g^{\ell})   
			= 1,  
		\end{equation*}
		by \eqref{eq-Sum}.  Thus $1$ and $-1$ have the multiplicities claimed.
	\end{proof}

	\begin{Proposition}
		The upper-left $p \times p$ matrix 
		\begin{equation}\label{eq-HMatrix}
			H=
			\frac{1}{p}
			\begin{bmatrix}
				 H_p(1)& H_p(g) & H_p(g^2) & \cdots  & H_p(g^{p-1})  \\
				 H_p(g)& H_p(g^2) & H_p(g^3) & \cdots & H_p(1)  \\
				 H_p(g^2)& H_p(g^3) & H_p(g^4) & \cdots & H_p(g)  \\
				 \vdots & \vdots & \vdots  & \iddots & \vdots  \\
				 H_p(g^{p-1}) & H_p(1) & H_p(g)  &\cdots & H_p(g^{p-2}) \\
			\end{bmatrix}
		\end{equation}
		has the eigenvalues $0,1,-1$ with multiplicities $1$, $\frac{p-1}{2}$, $\frac{p-1}{2}$,
		respectively.
	\end{Proposition}
	
	\begin{proof}
		It follows immediately from \eqref{eq-Norm} and \eqref{eq-DotProduct} that
		\begin{equation}\label{eq-HH}
			H^2
			= \frac{1}{p}
			\begin{bmatrix}
				p-1 & -1 & -1 & \cdots & -1 \\
				-1 & p-1 & -1 & \cdots & -1 \\
				-1 & -1 & p-1 & \cdots & -1 \\
				\vdots & \vdots & \vdots & \ddots & \vdots \\
				-1 & -1 & -1 & \cdots & p-1 
			\end{bmatrix}
			 =  I - \frac{1}{p}\vec{u}\vec{u}^T ,
		\end{equation}
		where $\vec{u}$ denotes the $p \times 1$ vector consisting of all ones.  Thus
		\begin{equation*}
			H^3 = H(I - \frac{1}{p}\vec{u}\vec{u}^T ) = H + \frac{1}{p}(H\vec{u}) \vec{u}^T = H
		\end{equation*}
		since $H\vec{u} = \vec{0}$ by \eqref{eq-Sum}.  Since $H^3 = H$, we conclude that the eigenvalues
		of $H$ are among $0$, $1$, and $-1$.  In light of \eqref{eq-HH}, it follows that $\tr H^2 = p-1$ whence $H$ has precisely
		$p-1$ nonzero eigenvalues.  Since $\tr H = 0$ by \eqref{eq-Sum}, we next see that $H$
		has the eigenvalues $0,1,-1$ with multiplicities $1$, $\frac{p-1}{2}$, $\frac{p-1}{2}$,
		respectively.
	\end{proof}

	We now identify the matrices $T_i$ from
	Lemma \ref{LemmaSESUP}.  Following the recipe developed there, we let $c_{i,j,k}$ denote 
	the number of solutions $(x,y)$ in $X_i\times X_j$ to 
	\begin{equation}\label{eq-cijk}
		x+y\equiv z \pmod{p^2},
	\end{equation}
	where $z$ is a fixed element of $X_k$, recalling that the value of $c_{i,j,k}$ is independent
	of the representative $z$ of $X_k$.  Lemma \ref{LemmaSESUP} ensures that $U$ simultaneously
	diagonalizes the matrices $T_1,T_2,\ldots,T_{p+2}$ whose entries are given by \eqref{eq-Tijk}.
	To be more specific, we have $T_iU=UD_i$, where 
	\begin{equation}\label{eq-D}
		D_i=\operatorname{diag}\big(\sigma_i(X_1),\sigma_i(X_2),\ldots,\sigma_i(X_{p+2})\big).
	\end{equation}
	Since each eigenvalue $\sigma_i(X_j)$ is real and $U$ is unitary, it follows that each $T_i$ is real and symmetric.
	In order to describe the matrices $T_1,T_2,\ldots,T_p$, we first
	require a few elementary facts about the $c_{i,j,k}$.

	\begin{Lemma}\label{LemmaPermute}
		If $1\leq i,j,k\leq p$, then $c_{i,j,k}=c_{\pi(i,j,k)}$ for any permutation $\pi(i,j,k)$.
	\end{Lemma}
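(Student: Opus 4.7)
The plan is to observe that the symmetric group $S_3$ is generated by the two adjacent transpositions $(i\;j)$ and $(j\;k)$, so it suffices to prove the two identities
\begin{equation*}
c_{i,j,k}=c_{j,i,k} \qquad \text{and} \qquad c_{i,j,k}=c_{i,k,j}.
\end{equation*}

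The first is immediate: the defining relation $x+y\equiv z\pmod{p^2}$ is symmetric in $x$ and $y$, so the map $(x,y)\mapsto(y,x)$ is a bijection between the set of pairs in $X_i\times X_j$ counted by $c_{i,j,k}$ and the set of pairs in $X_j\times X_i$ counted by $c_{j,i,k}$.

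For the second identity, I would use a double-counting argument on the set
\begin{equation*}
\mathcal{N}_{i,j,k}=\{(x,y,z)\in X_i\times X_j\times X_k:x+y\equiv z\pmod{p^2}\}.
\end{equation*}
Projecting onto the $z$-coordinate and using that $c_{i,j,k}$ is independent of the chosen $z\in X_k$ (part (1) of Lemma \ref{LemmaSESUP}) gives $|\mathcal{N}_{i,j,k}|=|X_k|\,c_{i,j,k}=(p-1)c_{i,j,k}$, since $|X_\ell|=p-1$ for all $1\leq\ell\leq p$. Projecting instead onto the $y$-coordinate, I would rewrite $x+y\equiv z$ as $(-x)+z\equiv y$ and invoke the key fact that $-1\in A$: by the binomial theorem, $(p-1)^p\equiv-1\pmod{p^2}$, so the involution $x\mapsto -x$ preserves each superclass $X_i$ with $1\leq i\leq p$. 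Consequently the number of $(x,z)\in X_i\times X_k$ with $x+y\equiv z$ equals the number of $(x',z)\in X_i\times X_k$ with $x'+z\equiv y$, which is $c_{i,k,j}$ (again by independence of representative). Summing over $y\in X_j$ yields $|\mathcal{N}_{i,j,k}|=(p-1)c_{i,k,j}$, so $c_{i,j,k}=c_{i,k,j}$.

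The main obstacle is really just the verification that $-1$ is a $p$th power modulo $p^2$, which is what allows me to pivot freely between the equivalent forms $x+y\equiv z$ and $(-x)+z\equiv y$ without leaving the superclass $X_i$. Everything else is bookkeeping, and crucially depends on the uniform cardinality $|X_\ell|=p-1$ for $1\leq\ell\leq p$, which is exactly why the statement is restricted to this range and does not include the exceptional superclasses $X_{p+1}$ and $X_{p+2}$.
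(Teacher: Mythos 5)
Your proof is correct, and it follows the paper's reduction to the two transpositions $c_{i,j,k}=c_{j,i,k}$ and $c_{i,j,k}=c_{i,k,j}$, with the first handled identically. For the second identity, however, your mechanism differs from the paper's. The paper works with a fixed representative $g^k\in X_k$, writes the defining congruence as $a^pg^i+b^pg^j\equiv g^k\pmod{p^2}$, and multiplies through by $(b')^p$ (where $b'$ is the inverse of $b$ modulo $p$, using Lemma \ref{LemmaSquare} to control $p$th powers) to obtain the equivalent congruence $(-ab')^pg^i+(b')^pg^k\equiv g^j\pmod{p^2}$; since $(a,b)\mapsto(-ab',b')$ is a bijection on pairs of units, this directly exhibits $c_{i,j,k}=c_{i,k,j}$. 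You instead double-count the triple set $\mathcal{N}_{i,j,k}$, using the additive rearrangement $x+y\equiv z\iff(-x)+z\equiv y$ together with the fact that $-1\in A$ (so negation preserves each $X_i$), plus the independence-of-representative statement from Lemma \ref{LemmaSESUP}(1) and the uniform cardinality $|X_\ell|=p-1$. Both arguments ultimately rest on $A$ being closed under negation (the paper's appears inside $(-ab')^p$); yours trades the multiplicative rescaling trick for a counting argument that invokes a bit more of the surrounding machinery but is arguably more transparent about why the restriction to $1\leq i,j,k\leq p$ matters. Your verification that $(p-1)^p\equiv-1\pmod{p^2}$ via the binomial theorem is fine, though it also follows instantly from Lemma \ref{LemmaSquare} applied to $-1\equiv p-1\pmod{p}$.
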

	
	\begin{proof}
		The identity $c_{i,j,k}=c_{j,i,k}$ is immediate.
		For any $z\in X_k$ the solutions $(x,y)\in X_i\times X_j$ of $x+y\equiv z\pmod{p^2}$ are also the solutions of $wx-wz\equiv -g^j\pmod{p^2}$, obtained by rearranging terms and multiplying through by $w=g^jy^{-1}$, where the inverse $y^{-1}$ is taken modulo $p^2$. As the pair $(x,y)$ ranges over $X_i\times X_j$, the pair $(wx,-wz)$ ranges over $X_i\times X_k$ since $w$ is a $p$th power modulo $p^2$ and $X_k$ is closed under negation. Hence $c_{i,j,k}=c_{i,k,j}$. The result follows.
	\end{proof}

	\begin{Lemma}\label{LemmaRowSum}
		If $1 \leq i\leq p$ and $j\neq i$, then 
		\begin{equation}\label{eq-RowSum}
			\sum_{k=1}^{p+2} c_{i,j,k}=p-1.
		\end{equation}
	\end{Lemma}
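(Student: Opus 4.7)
The plan is to count pairs $(x,y)\in X_i\times X_j$ in two ways. For every $z\in \Z/p^2\Z$, the pair $(x,y)$ contributes to exactly one $c_{i,j,k}$ (namely the $k$ for which $z=x+y\in X_k$), and summing $c_{i,j,k}$ over all representatives $z\in X_k$ multiplies by $|X_k|$ (since $c_{i,j,k}$ is independent of the representative, by part (1) of Lemma \ref{LemmaSESUP}). Thus
\begin{equation*}
\sum_{k=1}^{p+2} |X_k|\, c_{i,j,k} \;=\; |X_i|\cdot|X_j|.
\end{equation*}

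From Table \ref{TableSCT}, $|X_k|=p-1$ for $1\le k\le p+1$ and $|X_{p+2}|=1$. For $1\le i,j\le p$ we have $|X_i|=|X_j|=p-1$, so this identity becomes
\begin{equation*}
(p-1)\sum_{k=1}^{p+1} c_{i,j,k} \;+\; c_{i,j,p+2} \;=\; (p-1)^2.
\end{equation*}

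The heart of the argument is therefore to show that $c_{i,j,p+2}=0$ whenever $j\neq i$. Since $X_{p+2}=\{0\}$, this amounts to showing that there are no $a^p,b^p\in A$ with $g^i a^p + g^j b^p \equiv 0\pmod{p^2}$; equivalently, that $g^{i-j}$ does not lie in $-A$. Because $p$ is odd, $-1=(-1)^p\in A$, so $-A=A$, and the condition $g^{i-j}\in A$ is equivalent to $i\equiv j\pmod p$ (using that $A$ has order $p-1$ and $g$ has order $p(p-1)$, so the cosets $A,gA,\ldots,g^{p-1}A$ are distinct). For $1\le i,j\le p$ with $j\neq i$, this congruence fails, giving $c_{i,j,p+2}=0$.

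Plugging this back into the displayed equation yields $\sum_{k=1}^{p+1}c_{i,j,k}=p-1$, and since $c_{i,j,p+2}=0$ we obtain $\sum_{k=1}^{p+2}c_{i,j,k}=p-1$ as required. The main (minor) obstacle is simply the coset bookkeeping used to conclude $c_{i,j,p+2}=0$; once that vanishing is in hand the lemma follows from a straightforward double count.
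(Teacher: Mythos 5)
Your proposal is correct and follows essentially the same route as the paper: a double count of $X_i\times X_j$ weighted by $|X_k|$, combined with the key vanishing $c_{i,j,p+2}=0$ for $j\neq i$, which you justify (as the paper does) by observing that $g^{i-j}A\cap A=\varnothing$ when $i\not\equiv j\pmod p$. The only cosmetic difference is that you phrase the vanishing via $-A=A$ rather than absorbing the sign into $(-b)^p$, which changes nothing.
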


	\begin{proof}
		We first note that if $1 \leq i\leq p$ and $j\neq i$, then
		$c_{i,j,p+2} = 0$,
		since $a^p g^i + b^p g^j \equiv 0 \pmod{p^2}$ has no solutions when $i \neq j$.
		Indeed, the preceding is equivalent to $a^p g^{i-j} \equiv (-b)^p \pmod{p^2}$,
		which is inconsistent because $g^{i-j} A \cap A = \varnothing$.  Since $c_{i,j,p+2} = 0$ and
		$(\Z/p^2\Z)\backslash\{0\} = X_1 \cup X_2 \cup \cdots \cup X_{p+1}$,
		it follows that any sum of the form $x+y$, where $x$ and $y$ belong to $X_i$ and $X_j$, respectively,
		also belongs to $(\Z/p^2\Z)\backslash\{0\}$.  For $1 \leq k \leq p-1$, the superclass $X_k$
		has precisely $p-1$ distinct representatives whence $x+y$ belongs to $X_k$ for precisely
		$(p-1)c_{i,j,k}$ pairs $(x,y)$ in $X_i \times X_j$.  Thus $(p-1)^2= |X_i\times X_j|=\sum_{k=1}^{p+1} (p-1)c_{i,j,k}$,
		which implies \eqref{eq-RowSum}.
	\end{proof}
	
	We now have all of the information required to describe the general structure of $T_1,T_2,\ldots,T_p$.

	\begin{Lemma}\label{LemmaT}
		If $1 \leq i\leq p$, then
		\begin{equation}\label{eq-Ti}
			T_i =\small
			\left[
			\begin{array}{ccccccc|cc}
		                &    &   &  &  &  &  & 1 & 0\\
		                   &   &  &  &  &  &  & \vdots & \vdots \\
		                  &   &    & & & & & 1 &0\\
		                  & &  & C_i & & & &0 &\sqrt{p-1}\\
				  &  & & & & & & 1 & 0 \\
				 &  & & & &  & &\vdots &\vdots\\
				  &  & & & & & & 1&0\\
				  \hline
		                 1  & \cdots & 1 & 0 & 1 &\cdots &1 &0 &0\\
				 0 & \cdots & 0 & \sqrt{p-1} & 0 & \cdots & 0 & 0&0
			\end{array}
			\right],
		\end{equation}
		where $C_i = [c_{i,j,k}]_{j,k=1}^p$ and the $\sqrt{p-1}$ occurs in the $ith$ row and $i$th column. 
	\end{Lemma}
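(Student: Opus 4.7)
The plan is to compute each entry $[T_i]_{j,k} = c_{i,j,k}\sqrt{|X_k|}/\sqrt{|X_j|}$ and assemble them into the block form displayed in \eqref{eq-Ti}. Because $|X_\ell| = p-1$ for $1 \leq \ell \leq p+1$ while $|X_{p+2}| = 1$, the size factor is trivial except in row or column $p+2$. Since the paper has already observed that $T_i$ is real symmetric, only the entries on and above the diagonal need to be handled directly.

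The $p \times p$ block with $1 \leq j, k \leq p$ is $C_i = [c_{i,j,k}]_{j,k=1}^p$ by definition, with no size factor. The entries involving $X_{p+2} = \{0\}$ reduce to a single observation: $c_{i,j,p+2}$ counts pairs $(x,-x) \in X_i \times X_j$, and because $(-1)^p \equiv -1 \pmod{p^2}$ we see $-1 \in A$, hence $-X_i = X_i$. This yields $c_{i,j,p+2} = (p-1)\delta_{ij}$ for $1 \leq j \leq p$, and $c_{i, p+1, p+2} = 0$ because $X_{p+1}$ is disjoint from every $X_i$. Dividing by $\sqrt{|X_j|} = \sqrt{p-1}$ then produces the lone $\sqrt{p-1}$ in position $(i, p+2)$ and zeros elsewhere in the last column. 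The four corner entries indexed by $\{p+1, p+2\} \times \{p+1, p+2\}$ are all zero by direct inspection, since no sum of a nonunit with another nonunit or with $0$ can land in a unit coset $X_i$.

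The central step is to compute $c_{i, p+1, k}$ for $1 \leq k \leq p$. Fix $z \in X_k$; the task is to count $m \in \{1, \ldots, p-1\}$ with $z - mp \in X_i$. I would invoke the following structural fact about $(\Z/p^2\Z)^\times$: for any unit $z$, the $p$ lifts $\{z - mp : 0 \leq m \leq p-1\}$ are distributed one per coset among $X_1, X_2, \ldots, X_p$. This follows because the reduction map $A \to (\Z/p\Z)^\times$ is a bijection (by Lemma \ref{LemmaSquare}), whence each coset $g^r A$ also maps bijectively onto $(\Z/p\Z)^\times$ and therefore contains exactly one representative of each nonzero residue class modulo $p$. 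Since $m = 0$ accounts for the lift inside $X_k$, the remaining $p-1$ values of $m$ hit each of the other cosets $X_i$ ($i \neq k$) exactly once. Hence $c_{i, p+1, k} = 1 - \delta_{ik}$, which produces the $1$'s and single $0$ in row $p+1$ of \eqref{eq-Ti}. Symmetry of $T_i$ then populates column $p+1$.

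I expect the main obstacle to be articulating this ``one lift per coset'' principle cleanly; once it is in hand, the remaining entries drop out by elementary case analysis, and assembling them gives exactly the matrix displayed in \eqref{eq-Ti}.
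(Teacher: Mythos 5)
Your proof is correct, and while it shares the paper's overall skeleton---use the symmetry of $T_i$ to reduce to the border entries, handle the column indexed by $X_{p+2}$ via $-1\in A$, and kill the $2\times 2$ corner by a unit/nonunit check---the central computation is carried out by a genuinely different argument. The paper evaluates $c_{i,j,p+1}$ by exhibiting one explicit solution (take $b\equiv -ag^{i-j}\pmod{p}$) for each $j\neq i$, and then caps the total using the row-sum identity of Lemma \ref{LemmaRowSum} together with Lemma \ref{LemmaPermute}, a pigeonhole argument. You instead compute the transposed entry $c_{i,p+1,k}$ directly from the observation that each coset $g^rA$ reduces bijectively onto $(\Z/p\Z)^{\times}$, so the $p$ lifts of a fixed unit residue class mod $p$ are distributed one per coset among $X_1,\ldots,X_p$; the lift lying in $X_k$ is $z$ itself, and the remaining $p-1$ lifts $z-mp$ hit each $X_i$ with $i\neq k$ exactly once. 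Your route is somewhat more self-contained---it needs neither Lemma \ref{LemmaPermute} nor Lemma \ref{LemmaRowSum} at this point (and the paper's appeal to Lemma \ref{LemmaPermute} with the index $p+1$ technically falls outside that lemma's stated range $1\leq i,j,k\leq p$, so it requires a small supplementary remark), whereas the paper's version reuses machinery it has already built and avoids any discussion of the fibers of reduction mod $p$. Both yield $c_{i,j,p+1}=1-\delta_{ij}$ and hence the displayed matrix.
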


	\begin{proof}
		Suppose that $1 \leq i \leq p$.  Since $T_i$ is real and symmetric, it suffices to establish that the final two columns
		of $T_i$ have the desired form.
		In what follows, $a$ and $b$ denote units modulo $p$.
		
		We first show that the upper-right $p \times 2$ submatrix is of the form claimed.
		Let us consider the coefficients $c_{i,j,p+1}$ for $j \neq i$.  Since
		\begin{equation*}
			a^p g^i + b^p g^j \equiv p \pmod{p^2} 
			\quad\iff\quad
			ag^i + bg^j \equiv 0 \pmod{p},
		\end{equation*}
		for each fixed $a$ we may let $b \equiv -ag^{i-j} \pmod{p}$ to obtain a solution to the preceding congruences.
		In particular, this implies that $c_{i,j,p+1} \geq 1$ for $j \neq i$.	
		However, Lemmas \ref{LemmaPermute} and \ref{LemmaRowSum} tell us that
		$\sum_{j=1}^{p+2} c_{i,j,p+1} = p-1$,
		from which it follows that 
		\begin{equation}\label{eq-cijp1}
			c_{i,j,p+1} = 
			\begin{cases}
				1 & \text{if $j \neq i$},\\
				0 & \text{if $j= i$},
			\end{cases}
		\end{equation}
		as claimed.  Turning our attention to the final column of $T_i$, we note that
		the proof of Lemma \ref{LemmaRowSum} tells us that $c_{i,j,p+2} = 0$ for $j \neq i$.
		Moreover, $c_{i,i,p+2} = p-1$
		for $1 \leq i \leq p$ since
		\begin{equation*}
			a^p g^i + b^p g^i=g^i(a^p + b^p) \equiv 0 \pmod{p^2}
		\end{equation*}
		has exactly $p-1$ solutions $\{ (a,-a) : 1 \leq a \leq p-1\}$.  In other words,
		\begin{equation}\label{eq-ciip2}
			c_{i,j,p+2} = 
			\begin{cases}
				0 & \text{if $j \neq i$},\\
				p-1 & \text{if $j= i$}.
			\end{cases}
		\end{equation}
		
		That the lower-right $2 \times 2$ submatrix of $T_i$ is identically zero
		follows easily from the fact that $a^pg^i$ is a unit modulo $p^2$.
	\end{proof}

Our final lemma will be useful in Section \ref{SectionHB}.

	\begin{Lemma}\label{LemmaSRS}
		For $1 \leq i \leq p$,
		\begin{equation}
			\sum_{k=1}^{p} c_{i,i,k} = p-2. \label{eq-SRS}
		\end{equation}
	\end{Lemma}
	
	\begin{proof}
		First observe that there are exactly $(p-1)^2$ pairs $(a,b)$ with $1 \leq a,b \leq p-1$.
		Since $c_{i,i,k}$ is independent of the representative from $X_k$ which is chosen, it follows that 
		as $(x,y)$ ranges over $X_i\times X_i$, the sum $x+y$ assumes values in $X_k$
		exactly $|X_k| c_{i,i,k}$ times.  In light of \eqref{eq-cijp1} and \eqref{eq-ciip2}, we obtain
		\begin{equation*}
			(p-1)^2 
			= \sum_{k=1}^{p+2} |X_k| c_{i,i,k} 
			= \sum_{k=1}^p (p-1) c_{i,i,k}  +0+ (p-1),
		\end{equation*}
		which implies \eqref{eq-SRS}.
	\end{proof}

\section{The third moment and Fermat's Last Theorem}\label{SectionFLT}
	Although it is not obvious from their definition, Heilbronn's exponential sums
	are related to a certain family of congruences connected to Fermat's Last Theorem.
	Since the details and history of Fermat's Last Theorem are well-known, we make no attempt to 
	discuss the topic in depth, recalling only that this famous conjecture (proved by Andrew Wiles \cite{Wiles}),
	asserts that the equation
	$x^n + y^n = z^n$ has no integral solutions $x,y,z\geq 1$ if $n \geq 3$.  Moreover, the general
	case can be easily reduced to the consideration of odd prime exponents.
	
	\begin{Theorem}\label{TheoremFermat}
		If $p \nmid abc$, then the number of solutions $(x,y,z)$ in $(\Z/p^2\Z)^3$ 
		to the generalized Fermat congruence
		\begin{equation}\label{eq-GFC}
			ax^p + b y^p \equiv c z^p \pmod{p^2}
		\end{equation}
		which satisfy $p\nmid xyz$ is precisely 
		\begin{equation}\label{eq-F1}
			p^3(p-1) F(p;a,b,c), 
		\end{equation}
		where $F(p;a,b,c)$ denotes the nonnegative integer
		\begin{equation}\label{eq-F2}
			 F(p;a,b,c) = 1 - \frac{2}{p} + \frac{1}{p^2} \sum_{\ell=1}^p H_p(ag^{\ell})H_p(bg^{\ell})H_p(cg^{\ell}),
		\end{equation}
		where $g$ denotes a primitive root modulo $p^2$.
		In particular, the equation 
		\begin{equation*}
			ax^p + by^p = cz^p
		\end{equation*}
		has no solutions in integers with $p \nmid xyz$ whenever $F(p;a,b,c) = 0$.
	\end{Theorem}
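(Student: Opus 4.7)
The plan is to count the desired solutions in three stages: a $p$-to-$1$ reduction from integer triples to triples of $p$-th powers, a coset re-indexing that expresses the count in terms of a single structure constant $c_{i_a,i_b,i_c}$, and finally an extraction of $c_{i_a,i_b,i_c}$ via the spectral decomposition $T_i = U D_i U$ supplied by Lemma~\ref{LemmaSESUP}. For the first stage, Lemma~\ref{LemmaSquare} shows that $x\mapsto x^p$ is a $p$-to-$1$ surjection from $(\Z/p^2\Z)^{\times}$ onto $A$, so counting $(x,y,z)$ with $p \nmid xyz$ satisfying~\eqref{eq-GFC} reduces, with a multiplicative factor of $p^3$, to counting $(u,v,w) \in A^3$ with $au + bv \equiv cw \pmod{p^2}$.

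For the second stage, let $i_a, i_b, i_c \in \{1,2,\ldots,p\}$ be the indices with $a \in X_{i_a}$, $b \in X_{i_b}$, $c \in X_{i_c}$. Because $A$ is a subgroup and each $X_i$ is an $A$-coset, the bijections $u \mapsto au$, $v \mapsto bv$, $w \mapsto cw$ identify $A^3$ with $X_{i_a} \times X_{i_b} \times X_{i_c}$, converting the equation $au+bv\equiv cw$ into $\alpha+\beta\equiv\gamma \pmod{p^2}$. Summing the structure constants over $\gamma \in X_{i_c}$ (using that $c_{i,j,k}$ is independent of the representative of $X_k$) yields $(p-1)\, c_{i_a,i_b,i_c}$ such triples, so the intermediate count is $p^3(p-1)\, c_{i_a,i_b,i_c}$.

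For the spectral extraction, each $T_i$ is real symmetric and $U$ is real, symmetric, and unitary, so $T_{i_a} = U D_{i_a} U$ gives
$$c_{i_a,i_b,i_c} = [T_{i_a}]_{i_b,i_c} = \sum_{\ell=1}^{p+2} U_{i_b,\ell}\,\sigma_{i_a}(X_\ell)\,U_{i_c,\ell}.$$
Reading the relevant entries from Table~\ref{TableSCT} and~\eqref{eq-U}, the terms $\ell = p+1$ and $\ell = p+2$ collapse to $-1/p^2 + (p-1)^2/p^2 = 1 - 2/p$. A change of summation variable in~\eqref{eq-Heilbronn} shows that $H_p$ is constant on $A$-cosets, hence $H_p(ag^\ell) = H_p(g^{i_a+\ell})$ and similarly for $b$ and $c$; the remaining $p$ terms therefore contribute $p^{-2}\sum_{\ell=1}^{p} H_p(ag^\ell)H_p(bg^\ell)H_p(cg^\ell)$, matching the sum in~\eqref{eq-F2}. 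Combining everything yields~\eqref{eq-F1}; the nonnegative integrality of $F(p;a,b,c)$ follows from its identification with $c_{i_a,i_b,i_c}$. The final assertion is contrapositive: an integer solution with $p\nmid xyz$ reduces mod $p^2$ to a solution of the required type, which would force~\eqref{eq-F1} to be strictly positive.

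The main obstacle is the spectral step itself, specifically the row/column bookkeeping under the somewhat unusual indexing of $X_1,\ldots,X_{p+2}$, and verifying that the two tail contributions from $\ell=p+1,p+2$ combine with the properly normalized entries of $U$ to produce the clean constant $1 - 2/p$. The $p$-to-$1$ reduction and the coset re-indexing are routine once phrased in the superclass language.
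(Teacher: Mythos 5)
Your proposal is correct and follows essentially the same route as the paper: reduce the count to the structure constant $c_{i,j,k}$ (via the $p$-to-$1$ map $x\mapsto x^p$ and the coset identification, which the paper phrases as lifting solutions $(x+rp,y+sp,z+tp)$), then extract $c_{i,j,k}$ from the $(j,k)$ entry of $T_i = UD_iU$, with the $\ell=p+1,p+2$ terms supplying the constant $1-\tfrac{2}{p}$. The only cosmetic difference is that you perform the combinatorial counting before the spectral step rather than after, and your tail computation $-1/p^2+(p-1)^2/p^2=1-2/p$ checks out against the paper's $(p-1)(-1)^3+(p-1)^3$ bookkeeping.
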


	\begin{proof}
		If $p\nmid abc$ then $a,b,c$ are congruent modulo $p^2$ to some powers $g^i,g^j,g^k$ of $g$.  
		We may assume without loss of generality that 
		$1\leq i,j,k \leq p$ since $g^{i+\ell p} x^p \equiv g^i (g^{\ell} x)^p \pmod{p^2}$ and so forth. 
		
		Recall that $c_{i,j,k}$ denotes the number of solutions to the congruence 
		\begin{equation*}
			g^i x^p + g^j y^p \equiv g^k \pmod{p^2}
		\end{equation*}
		with $1 \leq x,y \leq p-1$.  Since there are $p-1$ different representatives of the superclass $X_k = g^k A$, there
		are $(p-1)c_{i,j,k}$ solutions to \eqref{eq-GFC} with $1\leq x,y,z \leq p-1$.  By considering
		$(x+rp,y+sp,z+tp)$ for $0 \leq r,s,t \leq p-1$ we obtain 
                $p^3(p-1)c_{i,j,k}$ distinct solutions to \eqref{eq-GFC}.
                
                We will be done if we can show that $c_{i,j,k}$ is equal to the right-hand side of \eqref{eq-F2}. Compute the $(j,k)$ entry of the matrix identity $T_i = UD_iU$ to obtain
        \begin{align}
        	c_{i,j,k} 
        	&= \frac{\sqrt{|X_j|}}{p^2\sqrt{|X_k|}} \sum_{\ell=1}^{p+2} \sigma_i(X_\ell)\sigma_j(X_\ell)\sigma_\ell(X_k) \nonumber\\
        	&=\frac{1}{p^2}\sum_{\ell=1}^{p+2} \sigma_i(X_{\ell})\sigma_j(X_{\ell})\sigma_{\ell}(X_k)\nonumber\\
        	&=\frac{1}{p^2|X_k|}\sum_{\ell=1}^{p+2} |X_{\ell}|\sigma_i(X_{\ell})\sigma_j(X_{\ell})\sigma_k(X_{\ell})\label{eq:4.11}\\
        	&=1 - \frac{2}{p} + \frac{1}{p^2} \sum_{\ell=1}^p H_p(ag^{\ell})H_p(bg^{\ell})H_p(cg^{\ell})\label{eq:4.12},
        \end{align}
        as desired, where \eqref{eq:4.11} follows from the fact that $U = U^T$ and \eqref{eq:4.12} follows from Table \ref{TableSCT}.
	\end{proof}

	As the preceding theorem illustrates, cubic sums of Heilbronn sums control, in a precise manner, whether
	the generalized Fermat congruence \eqref{eq-GFC} possesses any nontrivial solutions.  Indeed,
	we consider a solution satisfying $p|xyz$ trivial since if, say $p|x$, 
	the congruence reduces to $by^p \equiv c z^p \pmod{p^2}$, which has no solutions if
	$b = g^i$ and $c = g^j$ for $i \not\equiv j \pmod{p}$ and has only the $p(p-1)$ obvious solutions
	otherwise.  
	
	We remark that \eqref{eq-F2} can be used to efficiently 
	evaluate $F(p;a,b,c)$ for many triples $(a,b,c)$ in succession.  In fact, one can compute $F(p;a,b,c)$
	for all triples $(a,b,c)$ simultaneously by taking advantage of the identity $T_i = UD_iU$ and fast
	matrix multiplication.

	We present in Table \ref{TableFermat} numerical values of the function $F(p) =F(p;1,1,1)$, which corresponds to the
	classical Fermat congruence $x^p+y^p \equiv z^p \pmod{p^2}$.  In particular, $F(p) = 0$ implies that the Fermat equation
	$x^p + y^p = z^p$ has no solutions in integers satisfying $p \nmid xyz$.

	At this point it is worth mentioning Kummer's proof of Fermat's Last Theorem for regular primes.	
	Recall that a prime $p$ is called \emph{regular} if $p$ does not divide the class number of the cyclotomic field
	$\mathbb{Q}(\zeta)$ where $\zeta = e(\frac{1}{p})$.  It is well-known that $p$ is regular if and only if $p$ does not divide
	the numerator of the Bernoulli numbers $B_2,B_4,\ldots,B_{p-3}$  \cite[p.~198]{StewartTall}.
	Although Kummer himself believed that there are infinitely many regular primes, this conjecture remains open.
	On the other hand, Jensen proved that there are infinitely many \emph{irregular primes} (i.e., primes which are not regular),
	the first few of which are 
	\begin{quote}
		37, 59, 67, {\bf 101}, 103, {\bf 131}, {\bf 149}, 157, {\bf 233}, {\bf 257}, {\bf 263}, 271, 283, {\bf 293}, 307, {\bf 311}, {\bf 347}, {\bf 353}, 
		379, {\bf 389}, {\bf 401}, 409, 421, 
		433, {\bf 461}, 463, 467, {\bf 491}, 523, 541, 547, {\bf 557}, 577, {\bf 587}, {\bf 593}, 607, 613, {\bf 617}, 619, 631, {\bf 647}, {\bf 653}, 
		659, 673, {\bf 677}, 
		{\bf 683}, 691, 727, 751, 757, {\bf 761}, {\bf 773}, {\bf 797}, {\bf 809}, 811, {\bf 821}, {\bf 827}, {\bf 839}, 877, {\bf 881}, 887, 929, {\bf 953}, 971.
	\end{quote}
	The first major step in Kummer's approach is establishing
	that if $p$ is an odd regular prime, then $x^p+y^p = z^p$ has no integral solutions with $p \nmid xyz$ \cite{StewartTall}
	(in the terminology of \cite{BS}, this is referred to as the \emph{first case} of Fermat's Last Theorem).	
	A glance at Table \ref{TableFermat} reveals we have actually established that the Fermat equation $x^p + y^p = z^p$ has 
	no integral solutions $x,y,z\geq 1$ with $p \nmid xyz$ if $p$ is one of the irregular primes highlighted in boldface above.

		\begin{table}
		\begin{equation*}
		\footnotesize
		\begin{array}{|c|c||c|c||c|c||c|c||c|c||c|c|}
			\hline
			p & F(p) & p & F(p) & p & F(p) & p & F(p) & p & F(p) & p & F(p) \\
			\hline
			 3 & 0 & 127 & 2 & 281 & 0 & 461 & 0 & 647 & 0 & 853 & 2 \\
			 5 & 0 & 131 & 0 & 283 & 2 & 463 & 2 & 653 & 0 & 857 & 6 \\
			 7 & 2 & 137 & 0 & 293 & 0 & 467 & 0 & 659 & 0 & 859 & 2 \\
			 11 & 0 & 139 & 2 & 307 & 2 & 479 & 0 & 661 & 2 & 863 & 0 \\
			 13 & 2 & 149 & 0 & 311 & 0 & 487 & 2 & 673 & 2 & 877 & 2 \\
			 17 & 0 & 151 & 2 & 313 & 2 & 491 & 0 & 677 & 0 & 881 & 0 \\
			 19 & 2 & 157 & 2 & 317 & 0 & 499 & 2 & 683 & 0 & 883 & 2 \\
			 23 & 0 & 163 & 2 & 331 & 2 & 503 & 0 & 691 & 8 & 887 & 6 \\
			 29 & 0 & 167 & 0 & 337 & 8 & 509 & 0 & 701 & 12 & 907 & 8 \\
			 31 & 2 & 173 & 0 & 347 & 0 & 521 & 0 & 709 & 2 & 911 & 6 \\
			 37 & 2 & 179 & 6 & 349 & 2 & 523 & 2 & 719 & 0 & 919 & 2 \\
			 41 & 0 & 181 & 2 & 353 & 0 & 541 & 2 & 727 & 2 & 929 & 6 \\
			 43 & 2 & 191 & 0 & 359 & 0 & 547 & 8 & 733 & 2 & 937 & 2 \\
			 47 & 0 & 193 & 8 & 367 & 2 & 557 & 0 & 739 & 2 & 941 & 0 \\
			 53 & 0 & 197 & 0 & 373 & 2 & 563 & 0 & 743 & 0 & 947 & 0 \\
			 59 & 12 & 199 & 2 & 379 & 2 & 569 & 0 & 751 & 2 & 953 & 0 \\
			 61 & 2 & 211 & 2 & 383 & 0 & 571 & 2 & 757 & 8 & 967 & 2 \\
			 67 & 2 & 223 & 2 & 389 & 0 & 577 & 2 & 761 & 0 & 971 & 6 \\
			 71 & 0 & 227 & 6 & 397 & 2 & 587 & 0 & 769 & 2 & 977 & 6 \\
			 73 & 2 & 229 & 2 & 401 & 0 & 593 & 0 & 773 & 0 & 983 & 0 \\
			 79 & 8 & 233 & 0 & 409 & 2 & 599 & 0 & 787 & 8 & 991 & 2 \\
			 83 & 6 & 239 & 0 & 419 & 6 & 601 & 8 & 797 & 0 & 997 & 2 \\
			 89 & 0 & 241 & 2 & 421 & 8 & 607 & 2 & 809 & 0 & 1009 & 2 \\
			 97 & 2 & 251 & 0 & 431 & 0 & 613 & 2 & 811 & 2 & 1013 & 0 \\
			 101 & 0 & 257 & 0 & 433 & 2 & 617 & 0 & 821 & 0 & 1019 & 0 \\
			 103 & 2 & 263 & 0 & 439 & 2 & 619 & 8 & 823 & 2 & 1021 & 2 \\
			 107 & 0 & 269 & 0 & 443 & 6 & 631 & 2 & 827 & 0 & 1031 & 0 \\
			 109 & 2 & 271 & 2 & 449 & 0 & 641 & 0 & 829 & 2 & 1033 & 2 \\
			 113 & 0 & 277 & 2 & 457 & 8 & 643 & 2 & 839 & 0 & 1039 & 8 \\
		\hline
		\end{array}
		\end{equation*}
		\caption{Values of $F(p) = F(p;1,1,1)$ as $p$ ranges over the first 174 odd primes.  Primes $p$ for which
		$F(p)=0$ satisfy the property that the corresponding Fermat equation $x^p + y^p = z^p$ has no solutions
		in integers with $p \nmid xyz$.}
		\label{TableFermat}
	\end{table}	
\section{The fourth moment}\label{SectionHB}
	Using the fact that the matrices $T_i$ are simultaneously unitarily
	diagonalizable, we obtain a variety
	of quartic formulas involving Heilbronn sums.  
	
	\begin{Theorem}\label{TheoremQuartic}
		Letting $g$ denote a primitive root modulo $p^2$, for $1 \leq i,j,k,\ell \leq p$ we have
		\begin{align*}
			 p^2\sum_{r=1}^p c_{i,k,r} c_{j,\ell,r} &= 
			\sum_{r=1}^p H_p(g^{i+r}) H_p(g^{j+r}) H_p(g^{k+r}) H_p(g^{\ell+r}) \\
			&\qquad\qquad + 
			\begin{cases}
				- 2p^2 + 3p  & \text{if $i = k$ and $j = \ell$}, \\
				p^3- 4 p^2 + 3 p   & \text{if $i \neq k$ and $j \neq \ell$}, \\
				0 & \text{otherwise}.
			\end{cases}
		\end{align*}
		In particular,
		\begin{equation}\label{eq-Quartic}
			\sum_{\ell=1}^p H_p^4(g^{\ell}) =  p^2 \sum_{\ell=1}^p c_{i,i,\ell}^2 + 2p^2 - 3p.
		\end{equation}
	\end{Theorem}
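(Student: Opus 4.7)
The plan is to compute the $(k,\ell)$-entry of $T_iT_j$ in two different ways and equate the results. The key enabling observation is that the matrix $U$ in \eqref{eq-U} has real entries (every Heilbronn sum is real), is symmetric, and is unitary, so $U^2 = I$; the simultaneous diagonalization $T_iU = UD_i$ of Lemma \ref{LemmaSESUP} therefore upgrades to $T_i = UD_iU$, and hence $T_iT_j = UD_iD_jU$ for all $i,j$.

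The first route expands
\[
[T_iT_j]_{k,\ell} = \sum_{r=1}^{p+2}[T_i]_{k,r}\,[T_j]_{r,\ell}.
\]
For $r \leq p$ the contribution is $\sum_{r=1}^p c_{i,k,r}\,c_{j,\ell,r}$, after reading the upper-left $p\times p$ block of each $T_m$ from Lemma \ref{LemmaT} and invoking Lemma \ref{LemmaPermute} to rewrite $c_{j,r,\ell}$ as $c_{j,\ell,r}$. The boundary terms $r = p+1$ and $r = p+2$ contribute, via \eqref{eq-cijp1} and \eqref{eq-ciip2}, a quantity depending only on whether $k = i$ and whether $\ell = j$. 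The second route uses $T_iT_j = UD_iD_jU$ to write the same entry as
\[
[T_iT_j]_{k,\ell} = \sum_{r=1}^{p+2} U_{k,r}\,\sigma_i(X_r)\sigma_j(X_r)\,U_{r,\ell}.
\]
For $r \leq p$, the three factors are $H_p(g^{k+r})/p$, $H_p(g^{i+r})H_p(g^{j+r})$, and $H_p(g^{r+\ell})/p$, so the $r \leq p$ part equals $p^{-2}\sum_{r=1}^p H_p(g^{i+r})H_p(g^{j+r})H_p(g^{k+r})H_p(g^{\ell+r})$; the $r\in\{p+1,p+2\}$ parts contribute universal constants arising from $U_{k,p+1} = -1/p$, $U_{k,p+2} = \sqrt{p-1}/p$, $\sigma_i(X_{p+1}) = -1$, and $\sigma_i(X_{p+2}) = p-1$.

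Equating the two expressions and multiplying by $p^2$ produces the claimed quartic identity, the piecewise correction coming from combining the case-dependent boundary contribution of Route 1 with the case-independent Route 2 constants. For \eqref{eq-Quartic}, specialize to $i=j=k=\ell$ (the first case), so that $c_{i,k,r}c_{j,\ell,r} = c_{i,i,r}^2$, and use Lemma \ref{LemmaDOUP} to reindex $\sum_{r=1}^p H_p^4(g^{i+r}) = \sum_{\ell=1}^p H_p^4(g^\ell)$. The principal obstacle is careful bookkeeping of the last two rows/columns of $T_m$: the $(p+1)$-th column of Lemma \ref{LemmaT} has a $0$ in position $i$ and $1$'s in the other rows $\leq p$, while the $(p+2)$-th column has $\sqrt{p-1}$ only at position $i$, so the boundary product $[T_i]_{k,p+1}[T_j]_{p+1,\ell} + [T_i]_{k,p+2}[T_j]_{p+2,\ell}$ takes different values according to whether $k = i$, $\ell = j$, both, or neither, and assembling these four subcases with the Route 2 constants yields the trichotomy of the statement.
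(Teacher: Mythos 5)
Your proposal is correct and follows essentially the same route as the paper: both arguments compute $[T_iT_j]_{k,\ell}$ once from the explicit entries of $T_i,T_j$ given in Lemma \ref{LemmaT} (your appeal to Lemma \ref{LemmaPermute} to turn $c_{j,r,\ell}$ into $c_{j,\ell,r}$ is interchangeable with the paper's appeal to the symmetry of $T_j$) and once as $[UD_iD_jU]_{k,\ell}$, then equate and multiply by $p^2$. One caution for the final bookkeeping you defer: the Route~1 boundary terms contribute $p-1$, $1$, or $0$ in the three cases while the Route~2 boundary always contributes $\bigl(1+(p-1)^3\bigr)/p^2$, so the ``otherwise'' correction comes out to $p^3-3p^2+3p$ rather than the $0$ displayed in the statement --- a discrepancy that is equally present in the paper's own proof and does not affect the two cases used for \eqref{eq-Quartic}.
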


	\begin{proof}
		Since $U = U^*$ it follows from Lemma \ref{LemmaSESUP} that $T_i T_j = UD_i D_j U$
		for $1 \leq i,j \leq p+2$.  
		Letting $1 \leq i,j,k,\ell \leq p$ and recalling that $|X_i| = p-1$ in this range, 
		it follows from the symmetry of $T_j$, \eqref{eq-cijp1}, and \eqref{eq-ciip2} that
		\begin{align*}
			[T_iT_j]_{k,\ell} 
			&= \sum_{r=1}^{p+2} \frac{ c_{i,k,r} \sqrt{ |X_r| } }{ \sqrt{ |X_k|} } \cdot \frac{ c_{j,r,\ell} \sqrt{ |X_\ell| } }{ \sqrt{ |X_r|} } \\
			&= \sum_{r=1}^{p+2} \frac{ c_{i,k,r} \sqrt{ |X_r| } }{ \sqrt{ |X_k|} } \cdot \frac{ c_{j,\ell,r} \sqrt{ |X_r| } }{ \sqrt{ |X_{\ell}|} } \\
			&= \frac{1}{p-1} \sum_{r=1}^{p+2} |X_r| c_{i,k,r} c_{j,\ell,r} \\
			&= \sum_{r=1}^{p} c_{i,k,r} c_{j,\ell,r} + 
			\begin{cases}
				p-1  & \text{if $i = k$ and $j = \ell$}, \\
				1 & \text{if $i \neq k$ and $j \neq \ell$}, \\
				0 & \text{otherwise}.
			\end{cases}
		\end{align*}
		On the other hand, using the fact that $U = U^T$ we have
		\begin{align*}
			[UD_iD_jU]_{k,\ell}
			&= \frac{1}{p^2} \sum_{r=1}^{p+2} \frac{\sigma_k(X_r) \sqrt{|X_r|}}{\sqrt{|X_k|}}\cdot\sigma_i(X_r)\sigma_j(X_r) 
				\cdot \frac{ \sigma_r(X_{\ell}) \sqrt{|X_{\ell}|}}{\sqrt{|X_r|}} \\
			&= \frac{1}{p^2} \sum_{r=1}^{p+2} \frac{|X_r| \sigma_i(X_r)\sigma_j(X_r)\sigma_k(X_r) \sigma_{\ell}(X_r) }{p-1}\\
			&= \frac{1}{p^2}\Big( \sum_{r=1}^p \sigma_i(X_r)\sigma_j(X_r)\sigma_k(X_r) \sigma_{\ell}(X_r)
				+ 1 + (p-1)^3\Big) \\
			&= \frac{1}{p^2}\Big( \sum_{r=1}^p H_p(g^{i+r}) H_p(g^{j+r}) H_p(g^{k+r}) H_p(g^{\ell+r}) + 1 + (p-1)^3\Big).
		\end{align*}
		Equating our expressions for the matrix entries $[T_iT_j]_{k,\ell}$ and $[UD_iD_jU]_{k,\ell}$
		yields the desired result.
	\end{proof}

	\begin{Corollary}\label{TheoremHB}
		If $1 \leq i\leq p$, then
		\begin{equation}\label{eq-Threshest}
			\max_{1 \leq k \leq p} c_{i,i,k} \ll p^\beta
			\quad \implies \quad H_p(u)\ll p^{\frac{3+\beta}{4}}
		\end{equation}
		whenever $p \nmid u$.
	\end{Corollary}

	\begin{proof}
	As a consequence of \eqref{eq-Quartic}, for $p \nmid u$ we obtain
	\begin{equation}\label{eq-RSEQ}
		H_p(u)\ll p^{\frac{1}{2}}\Big(\sum_{k=1}^{p} c_{i,i,k}^2\Big)^{\frac{1}{4}},
	\end{equation}
	thereby recovering Heath-Brown's observation \cite[Lem.~1]{HB2}.
	Lemma \ref{LemmaSRS} and the assumption that $c_{i,i,k} \ll p^{\beta}$ ensure that
            \begin{equation*}
                \sum_{k=1}^p c_{i,i,k}^2 \ll \sum_{k=1}^p p^{\beta} c_{i,i,k} = p^{\beta}(p-2) \leq p^{1 + \beta}.\qedhere
            \end{equation*}
		Plugging this into \eqref{eq-RSEQ} we obtain \eqref{eq-Threshest}. \qedhere
	\end{proof}

	A result of Mit'kin implies that we may take $\beta = \frac{2}{3}$ in \eqref{eq-Threshest} \cite{Mitkin},
	which yields Heath-Brown's estimate $H_p(u) \ll p^{11/12}$ for $p\nmid u$ \cite{HB1}.
	The upper bound was later improved by Heath-Brown and Konyagin
	to $p^{7/8}$ \cite{HBK} and by Shkredov to 
	$p^{59/68} \log^{5/34} p$~\cite{shkredov2012}.

\bibliography{SAHS}

\def\cprime{$'$}
\providecommand{\bysame}{\leavevmode\hbox to3em{\hrulefill}\thinspace}
\providecommand{\MR}{\relax\ifhmode\unskip\space\fi MR }
\providecommand{\MRhref}[2]{%
  \href{http://www.ams.org/mathscinet-getitem?mr=#1}{#2}
}
\providecommand{\href}[2]{#2}
\begin{thebibliography}{10}

\bibitem{An01}
Carlos A.~M. Andr{\'e}, \emph{The basic character table of the unitriangular
  group}, J. Algebra \textbf{241} (2001), no.~1, 437--471. \MR{MR1839342
  (2002e:20082)}

\bibitem{An02}
\bysame, \emph{Basic characters of the unitriangular group (for arbitrary
  primes)}, Proc. Amer. Math. Soc. \textbf{130} (2002), no.~7, 1943--1954
  (electronic). \MR{MR1896026 (2003g:20075)}

\bibitem{An95}
Carlos~A.M. Andr{\'e}, \emph{Basic characters of the unitriangular group}, J.
  Algebra \textbf{175} (1995), no.~1, 287--319. \MR{MR1338979 (96h:20081a)}

\bibitem{HendricksonNew}
Samuel~G. Benidt, William R.~S. Hall, and Anders O.~F. Hendrickson, \emph{Upper
  and lower semimodularity of the supercharacter theory lattices of cyclic
  groups}, Comm. Algebra \textbf{42} (2014), no.~3, 1123--1135. \MR{3169622}

\bibitem{BS}
A.~I. Borevich and I.~R. Shafarevich, \emph{Number theory}, Translated from the
  Russian by Newcomb Greenleaf. Pure and Applied Mathematics, Vol. 20, Academic
  Press, New York, 1966. \MR{0195803 (33 \#4001)}

\bibitem{SESUP}
J.~L. Brumbaugh, Madeleine Bulkow, Patrick~S. Fleming, Luis~Alberto
  Garcia~German, Stephan~Ramon Garcia, Gizem Karaali, Matt Michal, Andrew~P.
  Turner, and Hong Suh, \emph{Supercharacters, exponential sums, and the
  uncertainty principle}, J. Number Theory \textbf{144} (2014), 151--175.
  \MR{3239156}

\bibitem{GNSG}
J.~L. Brumbaugh, Madeleine Bulkow, Luis~Alberto Garcia~German, Stephan~Ramon
  Garcia, Matt Michal, and Andrew~P. Turner, \emph{The graphic nature of the
  symmetric group}, Exp. Math. \textbf{22} (2013), no.~4, 421--442.
  \MR{3171103}

\bibitem{Kloosterman}
Paula Burkhardt, Alice Zhuo-Yu Chan, Gabriel Currier, Stephan~Ramon Garcia,
  Florian Luca, and Hong Suh, \emph{Visual properties of generalized
  {K}loosterman sums}, J. Number Theory \textbf{160} (2016), 237--253.
  \MR{3425206}

\bibitem{Diaconis}
P.~Diaconis and I.M. Isaacs, \emph{Supercharacters and superclasses for algebra
  groups}, Trans. Amer. Math. Soc. \textbf{360} (2008), no.~5, 2359--2392.
  \MR{MR2373317 (2009c:20012)}

\bibitem{GNGP}
William Duke, Stephan~Ramon Garcia, and Bob Lutz, \emph{The graphic nature of
  {G}aussian periods}, Proc. Amer. Math. Soc. \textbf{143} (2015), no.~5,
  1849--1863. \MR{3314096}

\bibitem{RSS}
Christopher~F. Fowler, Stephan~Ramon Garcia, and Gizem Karaali, \emph{Ramanujan
  sums as supercharacters}, Ramanujan J. \textbf{35} (2014), no.~2, 205--241.
  \MR{3266478}

\bibitem{gausscyclotomy}
Stephan~Ramon Garcia, Trevor Hyde, and Bob Lutz, \emph{Gauss's hidden
  menagerie: from cyclotomy to supercharacters}, Notices Amer. Math. Soc.
  \textbf{62} (2015), no.~8, 878--888. \MR{3379072}

\bibitem{HB1}
D.~R. Heath-Brown, \emph{An estimate for {H}eilbronn's exponential sum},
  Analytic number theory, {V}ol.\ 2 ({A}llerton {P}ark, {IL}, 1995), Progr.
  Math., vol. 139, Birkh\"auser Boston, Boston, MA, 1996, pp.~451--463.
  \MR{1409372 (97k:11120)}

\bibitem{HB2}
\bysame, \emph{Heilbronn's exponential sum and transcendence theory}, A
  panorama of number theory or the view from {B}aker's garden ({Z}\"urich,
  1999), Cambridge Univ. Press, Cambridge, 2002, pp.~353--356. \MR{1975462
  (2004d:11076)}

\bibitem{HBK}
D.~R. Heath-Brown and S.~Konyagin, \emph{New bounds for {G}auss sums derived
  from {$k{\rm th}$} powers, and for {H}eilbronn's exponential sum}, Q. J.
  Math. \textbf{51} (2000), no.~2, 221--235. \MR{1765792 (2001h:11106)}

\bibitem{Hendrickson}
Anders O.~F. Hendrickson, \emph{Supercharacter theory constructions
  corresponding to {S}chur ring products}, Comm. Algebra \textbf{40} (2012),
  no.~12, 4420--4438. \MR{2989654}

\bibitem{HendricksonThesis}
Anders Olaf~Flasch Hendrickson, \emph{Supercharacter theories of cyclic
  p-groups}, ProQuest LLC, Ann Arbor, MI, 2008, Thesis (Ph.D.)--The University
  of Wisconsin - Madison. \MR{2711764}

\bibitem{Kowalski}
E.~Kowalski, \emph{Exponential sums over finite fields, {I}: elementary
  methods}, preprint.

\bibitem{Mitkin}
D.~A. Mit{\cprime}kin, \emph{An estimate for the number of roots of some
  comparisons by the {S}tepanov method}, Mat. Zametki \textbf{51} (1992),
  no.~6, 52--58, 157. \MR{1187477 (93h:11137)}

\bibitem{shkredov2012}
I.~D. Shkredov, \emph{On {H}eilbronn's exponential sum}, Q. J. Math.
  \textbf{64} (2013), no.~4, 1221--1230. \MR{3151613}

\bibitem{StewartTall}
Ian Stewart and David Tall, \emph{Algebraic number theory and {F}ermat's last
  theorem}, third ed., A K Peters Ltd., Natick, MA, 2002. \MR{1876804
  (2002k:11001)}

\bibitem{Wiles}
Andrew Wiles, \emph{Modular elliptic curves and {F}ermat's last theorem}, Ann.
  of Math. (2) \textbf{141} (1995), no.~3, 443--551. \MR{1333035 (96d:11071)}

\end{thebibliography}
\bibliographystyle{amsplain}
\end{document}